\documentclass{amsart}

\usepackage{amssymb, amsmath,  amsfonts,mathrsfs,graphicx }
\usepackage{color }
 \DeclareGraphicsRule{*}{mps}{*}{}

\usepackage[all,cmtip]{xy}

\newtheorem{thm}{Theorem}[section]
\newtheorem{prop}[thm]{Proposition}
\newtheorem{lemma}[thm]{Lemma}

\newtheorem{cor}[thm]{Corollary}
\newtheorem{defn}[thm]{Definition}
\newtheorem{example}[thm]{Example}
\newtheorem{remark}[thm]{Remark}

\newtheorem{qus}[thm]{Question}

 \numberwithin{equation}{section}

\begin{document}{\allowdisplaybreaks[4]

\title{On Frobenius-Perron dimension}


\author{Changzheng Li}
 \address{School of Mathematics, Sun Yat-sen University, Guangzhou 510275, P.R. China}
\email{lichangzh@mail.sysu.edu.cn}

\author{Ryan M. Shifler}
\address{ Department of Mathematics, Henson Science Hall, Salisbury University, Salisbury MD 21801 USA}
\email{rmshifler@salisbury.edu}
\author{Mingzhi Yang}
 \address{School of Mathematics, Sun Yat-sen University, Guangzhou 510275, P.R. China}
\email{yangmzh8@mail2.sysu.edu.cn}

\author{Chi Zhang}
 \address{Department of Mathematics, Caltech, 1200 East California Boulevard, Pasadena, CA 91125}
\email{czhang5@caltech.edu}
.

\thanks{}

\date{
      }




\begin{abstract}
 We propose a notion of Frobenius-Perron dimension for certain free $\mathbb{Z}$-modules of infinite rank and compute it for the $\mathbb{Z}$-modules of finite dimensional complex representations of unitary groups with nonnegative dominant weights. The definition of Frobenius-Perron dimension that we are introducing naturally generalizes the well-known Frobenius-Perron dimension on the category of finite dimensional complex representations of a finite group.
 \end{abstract}

\maketitle

\section{Introduction}

The well-known Frobenius-Perron theory (see e.g. \cite{BePl}) concerns eigenvalues and eigenvectors of  nonnegative square matrices of finite order, and has applications to many areas of research in mathematics.
As one remarkable property in Frobenius-Perron theory, the spectral radius of  an $N\times N$ irreducible nonnegative matrix is a simple eigenvalue of the matrix.
The notion of the Frobenius-Perron dimension was motivated by that
of the index of a subfactor \cite{Jone}.
It was first defined  for commutative fusion rings   by
Fr\"ohlich and Kerler \cite{FrKe} as functions satisfying certain properties. The theory of Frobenius-Perron dimensions for general fusion rings and categories was developed by Etingof, Nikshych and Ostrik \cite{ENO};   for some other cases, we refer to \cite{ Etin,  EtOs, CGWZ3,Etin2, Wick} and references therein. The key ingredient here is to define a function on
 a nice $\mathbb{Z}_+$-ring of finite rank whose value at a basis element is the spectral radius of the induced linear operator as guaranteed by the Frobenius-Perron theory.

 As a typical example, the Frobenius-Perron dimension ${\rm FPdim}$ of the Grothendieck ring $\mathbf{\rm Gr}({\rm Rep}(G_{\rm fin}))$ of the category of finite dimensional complex representations of a finite group $G_{\rm fin}$ is the  ring homomorphism
 $${\rm FPdim}: \mathbf{\rm Gr}({\rm Rep}(G_{\rm fin})) \to \mathbb{C}\quad\mbox{ with }\quad  {\rm FPdim}([V_{\rm fin}])=\dim V_{\rm fin}$$
 for any irreducible representation $V_{\rm fin}$ of $G_{\rm fin}$. There are infinitely many  isomorphism classes in
  the category ${\rm Rep}(G)$ of
 finite dimensional complex representations of a  compact Lie group (or equivalently, a reductive complex algebraic group) $G$ (or its Lie algebra ${\rm Lie}(G)$ alternatively). This, however, is one of the most important targets in representation theory. It is therefore quite natural to ask the following question.
\begin{qus}\label{ques1}
  Is there an appropriate  notion of Frobenius-Perron dimension ${\rm FPdim}$ for the Grothendieck ring
   $\mathbf{\rm Gr}({\rm Rep}(G))$, with  which ${\rm FPdim}: \mathbf{\rm Gr}({\rm Rep}(G)) \to \mathbb{C}$ is a function satisfying the following properties?
   \begin{enumerate}
     \item ${\rm FPdim}([V])=\dim V$
  for any  irreducible representation $V\in {\rm Rep}(G)$;
     \item ${\rm FPdim}$ is a ring homomorphism.
   \end{enumerate}
  \end{qus}

In the present paper, we propose a notion of Frobenius-Perron dimension  for   $\mathbb{Z}_+^\bullet$-rings, generalizing the standard one for $\mathbb{Z}_+$-ring of finite rank.  As we will see in Definitions \ref{defZ} and \ref{defmain}, we define the generalized Frobenius-Perron dimension to be the limit of the standard one for a $\mathbb{Z}_{\geq 0}$-filtration of $\mathbb{Z}_+$-rings of finite rank. By the definition, the expected property (1) in Question \ref{ques1} is not necessarily satisfied a priori.    Finite dimensional irreducible representations $\mathbb{S}_\lambda(V)$ of $G=U(k)$ are indexed by decreasing sequences $\lambda=(\lambda_1, \cdots, \lambda_k)$ of integers. The irreducible representations $\mathbb{S}_\lambda(V)$ with $\lambda_k\geq 0$  {generate a subcategory ${\rm Rep}(U(k))_{+}$}
of ${\rm Rep}(U(k))$.
 We  answer Question \ref{ques1} for unitary groups $U(k)$ in the expected way, yet partially in  the following sense.

 \begin{thm}
   There is a generalized  Frobenius-Perron dimension ${\rm FPd}^\bullet$ of the $\mathbb{Z}_+^\bullet$-ring $\mathbf{\rm Gr}({\rm Rep}(U(k))_+)$, given by the {$\mathbb{Z}_+^\bullet$-ring homomorphism}
  $${\rm FPd}^\bullet: \mathbf{\rm Gr}({\rm Rep}(U(k))_+) \to \mathbb{C}; \quad  {\rm FPd}^\bullet([\mathbb{S}_\lambda(V)])=\dim \mathbb{S}_\lambda(V).$$
 \end{thm}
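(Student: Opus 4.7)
The plan is to identify $\mathbf{\rm Gr}({\rm Rep}(U(k))_+)$ with the ring $\Lambda_k$ of symmetric polynomials in $k$ variables equipped with its Schur $\mathbb{Z}_+$-basis $\{s_\lambda:\ell(\lambda)\leq k\}$, via $[\mathbb{S}_\lambda(V)]\mapsto s_\lambda$. Under this identification the Littlewood--Richardson rule supplies the nonnegative integer structure constants, and the ordinary dimension becomes the evaluation $s_\lambda\mapsto s_\lambda(1,\ldots,1)=\dim\mathbb{S}_\lambda(V)$, which is multiplicative in $\lambda$. The theorem then reduces to producing a $\mathbb{Z}_{\geq 0}$-filtration of $\Lambda_k$ by $\mathbb{Z}_+$-rings of finite rank, in the sense of Definitions \ref{defZ} and \ref{defmain}, such that the classical Frobenius--Perron dimension of $s_\lambda$ within the $n$-th level converges to $s_\lambda(1,\ldots,1)$.

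For the filtration I would take $A_n:=QH^*(Gr(k,n+k))|_{\mathbf{q}=\mathbf{1}}$ for $n\geq 0$, each a $\mathbb{Z}_+$-ring of rank $\binom{n+k}{k}$ with Schubert basis $\{[\sigma_\lambda]:\lambda\subseteq(n^k)\}$. There is a natural surjection $\pi_n:\Lambda_k\twoheadrightarrow A_n$ sending $s_\lambda\mapsto[\sigma_\lambda]$ for $\lambda\subseteq(n^k)$ and expanding larger Schur polynomials in the Schubert basis with nonnegative integer coefficients via the quantum Giambelli and quantum Pieri rules. The resulting tower $(A_n,\pi_n)_n$ exhausts $\Lambda_k$ in the limit $n\to\infty$ and should realize $\mathbf{\rm Gr}({\rm Rep}(U(k))_+)$ as a $\mathbb{Z}_+^\bullet$-ring in the paper's sense.

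To compute ${\rm FPdim}_{A_n}([\sigma_\lambda])$ I would invoke the Bertram--Vafa--Intriligator/Rietsch description of the joint spectrum of multiplication operators on $A_n$: the joint eigenvectors $v_I$ are indexed by $k$-element subsets $I\subset\mu_{n+k}$ of the $(n+k)$-th roots of unity, with $[\sigma_\lambda]\cdot v_I=s_\lambda(I)\,v_I$. By Perron--Frobenius applied to the irreducible nonnegative matrix of multiplication by $[\sigma_1]$, the eigenvector with all-positive Schubert coordinates is the unique one whose eigenvalue is a positive real; this singles out the subset $I_n^\ast=\{e^{2\pi ij/(n+k)}:|j|\leq (k-1)/2\}$ of the $k$ roots of unity clustered symmetrically around $1$, as an elementary Dirichlet-kernel computation shows that this choice maximizes $|s_1(I)|$ among size-$k$ subsets of $\mu_{n+k}$. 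Hence ${\rm FPdim}_{A_n}([\sigma_\lambda])=s_\lambda(I_n^\ast)$, and since every element of $I_n^\ast$ tends to $1$ as $n\to\infty$, continuity of the Schur polynomial gives $s_\lambda(I_n^\ast)\to s_\lambda(1,\ldots,1)=\dim\mathbb{S}_\lambda(V)$, which is the claimed value.

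The main obstacle I anticipate is the first step: verifying that the tower $(A_n,\pi_n)_n$ actually constitutes a $\mathbb{Z}_{\geq 0}$-filtration of a $\mathbb{Z}_+^\bullet$-ring in the precise technical sense of Definitions \ref{defZ} and \ref{defmain}, and that the generalized Frobenius--Perron dimension is well-defined via this particular tower (in particular, showing stabilization or monotonicity of ${\rm FPdim}_{A_n}([\sigma_\lambda])$ in $n$ for fixed $\lambda$, and ruling out contributions from the higher quantum corrections that are suppressed as $n\to\infty$). The identification of $I_n^\ast$ as the Perron configuration and the passage to the limit are comparatively routine; multiplicativity of ${\rm FPd}^\bullet$ then follows automatically from the limit of the multiplicative classical Frobenius--Perron dimensions, and agreement with $\dim\mathbb{S}_\lambda(V)$ is the content of Weyl's dimension formula $s_\lambda(1,\ldots,1)=\prod_{1\leq i<j\leq k}\tfrac{\lambda_i-\lambda_j+j-i}{j-i}$.
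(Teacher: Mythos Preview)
Your proposal is correct and follows essentially the same route as the paper. Both arguments take the filtration levels to be $QH^*(Gr(k,k+r))|_{q=1}$ (the paper reaches this via Witten's isomorphism from the Verlinde algebra $(A_r,\star_r)$, Proposition~\ref{Witteniso}), invoke Rietsch's description of the spectrum, and pass to the limit $r\to\infty$. The only real difference is cosmetic: the paper records Rietsch's Perron eigenvalue as the explicit sine product of Lemma~\ref{lemspec} and matches it against the hook-length formula (Lemma~\ref{lemhook}), whereas you keep it in the form $s_\lambda(I_n^\ast)$ and use continuity of the Schur polynomial together with $I_n^\ast\to(1,\dots,1)$. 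These are two packagings of the same computation; your version is arguably cleaner since it avoids the trigonometric bookkeeping.

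One small adjustment: Definition~\ref{defZ} asks for a filtration by $\mathbb{Z}$-\emph{submodules} $A_r\subset A$, each carrying its own product $\star_r$, not for a tower of surjections $\pi_n:\Lambda_k\twoheadrightarrow A_n$. So you should regard $A_r$ as the span of $\{s_\lambda:\lambda\subseteq (r^k)\}$ inside $\Lambda_k$, equipped with the quantum product, rather than as a quotient. With that rephrasing conditions (1)--(4) are immediate, and your anticipated obstacles evaporate: no monotonicity or stabilization is needed (your continuity argument already gives the limit), and there are no ``higher quantum corrections'' to suppress, since $\star_r$ \emph{is} the full quantum product on $A_r$. The ring-homomorphism property of each ${\rm FPdim}_{A_r}$ --- needed for ${\rm FPd}^\bullet$ to be a $\mathbb{Z}_+^\bullet$-ring homomorphism in the sense of Definition~\ref{defZ} --- follows, as the paper notes, from the simultaneous diagonalizability of the $\widehat{[X_\lambda]}$ in Rietsch's basis; this is exactly the ``multiplicative classical Frobenius--Perron dimensions'' you allude to.
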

\noindent We will restate the above conclusion in Theorem \ref{thmmain3} in a more precise way.

On one hand, the idea of our generalized notion of Frobenius-Perron dimension is natural. The free
$\mathbb{Z}$-module $\mathbf{\rm Gr}({\rm Rep}(U(k))_+)$ admits a natural $\mathbb{Z}_{\geq 0}$-filtration
of $\mathbb{Z}$-modules ${\rm Rep}(U(k))_\ell$ generated by isomorphism classes $[\mathbb{S}_\lambda(V)]$ with $\ell\geq \lambda_1\geq\lambda_2\geq \cdots \geq \lambda_k\geq 0$.
Each data ${\rm Rep}(U(k))_\ell$ is of finite rank, and  inherits a $\mathbb{Z}_+$-ring structure  induced from the ``natural" tensor product with the help of natural projections. On the other hand, the subtle point here is that we need to consider the ``quantum version" of the tensor product instead. Slightly more precisely, we need to consider the fusion ring structure of ${\rm Rep}(U(k))_\ell$, called the Verlinde algebra (named after Erik Verlinde \cite{Verl}) at level $(\ell, k+\ell)$.
Those  isomorphism classes $[\mathbb{S}_\lambda(V)]$ with $\ell\geq \lambda_1\geq  \cdots \geq \lambda_k\geq 0$ form a $\mathbb{Z}_+$-basis, with the structure constants counting the dimension of the space of sections of appropriate line bundles over the moduli space of semi-stable parabolic bundles, or equivalently, counting the dimension of certain vector spaces of conformal blocks as studied in the physics literature.  (See \cite{Witt, Agni, Beau, Belk} for more details.)

We do the computation by using Witten's   remarkable ring isomorphism  \cite{Witt, Agni,Belk}
$${\rm Rep}(U(k))_\ell\to QH^*(Gr(k, k+\ell))|_{q=1}; \quad [\mathbb{S}_\lambda(V)]\mapsto [X_\lambda]. $$
Here $QH^*(Gr(k, k+\ell))$ denotes   the (small) quantum cohomology   of the complex Grassmannian $Gr(k, k+\ell)$. {The $\mathbb{Z}_+$-ring $QH^*(Gr(k, k+\ell))|_{q=1}$ has a standard $\mathbb{Z}_+$-basis of  Schubert classes $[X_\lambda]$.} Thanks to the above isomorphism,   the computation for the Verlinde algebra can be translated to that for the spectral radius of linear operators on $QH^*(Gr(k, k+\ell))|_{q=1}$ induced by the Schubert classes. The latter objects are of significant interest in algebraic geometry and mirror symmetry in their own right. For instance, Galkin, Golyshev and Iritani \cite{GGI} proposed Gamma conjectures I and II for Fano manifolds $X$, together with the underlying conjecture $\mathcal{O}$ that concerns the spectral radius of the linear operator $\hat c_1(X)$ on $QH^*(X)|_{\mathbf{q}=\mathbf{1}}$ induced by the first Chern class $c_1(X)$. The conjecture $\mathcal{O}$ for general homogeneous varieties (of which $Gr(k, k+\ell)$ is a special case) was verified by Cheong and the first named author \cite{ChLi} by using Frobenius-Perron theory. There is another conjecture proposed by Galkin \cite{Galk}, which concerns the lower bound of the spectral radius of linear operator $\hat c_1(X)$. This was recently verified for  complex Grassmannian \cite{ESSSW} and for Lagrangian and orthogonal Grassmannians \cite{ChHa}. Our generalized notion of Frobenius-Perron dimension was actually motivated by the studying the properties (especially \textbf{Theorem \ref{thmmain}}) of the spectral radius of the linear operators induced from the basis of Schubert classes.
  As a byproduct, we provide lower bounds in \textbf{Theorem \ref{thmmain2}} for special radius of these linear operators.

We finally remark that  
 Question \ref{ques1} is related with  the amenability \cite{Popa, LoRo, HiIz} of dimension problems in representation theory and operator algebras. The study of analytic properties in the theory of rigid $C^*$-tensor categories $\mathcal{C}$  goes back to   \cite{GLR,DoRo}.    The positive dimension $d_{\rm min}$ function on $\mathcal{C}={\rm Rep}(G)$, obtained by taking standard minimal solutions to the conjugate equations, is amenable and the value $d_{\rm min}([V])$ at an irreducible representation $V$  equals $\dim V$ (see e.g. \cite[\S 2.7]{NeTu}). Our generalized Frobenius-Perron dimension provides    an algebraic approach to the realization of $d_{\rm min}$ in the case $G=U(k)$. Actually, by the name ``Frobenius-Perron" we have  assumed  an implicit condition in  Question \ref{ques1}  that ${\rm FPdim}([V])$ is somehow given by a spectral radius of a linear operator.
 The fusions rings of complex simple Lie algebras of general Lie types can be defined \cite{Beau}. It would be interesting to investigate whether there are desired  answers to Question \ref{ques1} for these general cases.

 The paper is organized as follows. In section 2, we review the standard notion of Frobenius-Perron dimension of a $\mathbb{Z}_+$-ring of finite rank and generalize it to that of certain $\mathbb{Z}_+$-ring of infinite rank. In section 3, we study properties of the spectral radius of linear operators on $QH^*(Gr(k, n))|_{q=1}$ induced by   Schubert classes. In section 4, we compute the Frobenius-Perron dimension for the polynomial representation ring of unitary groups.

\section{Frobenius-Perron dimension}
We mainly follow  \cite{EGNO} for the standard notion of Frobenius-Perron dimension.
A basis  $\{\beta_i\}_{i\in I}$ of a ring  which is free as a $\mathbb{Z}$-module is called a  $\mathbb{Z}_{+}$-basis if
 $\beta_i\cdot \beta_j=\sum_{r\in I} c_{ij}^r\beta_r$ with $c_{ij}^r\in \mathbb{Z}_{\geq 0}$ for any $i, j, r$.
Let $A$ be a $\mathbb{Z}_{+}$-ring, namely  a ring with identity 1 together with a
fixed $\mathbb{Z}_{+}$-basis $\{\beta_i\}_{i\in I}$.
  Each $\beta_i$ induces a linear operator $\hat {\beta}_i: A\to A; \gamma\mapsto \beta_i \cdot \gamma$.
If $I$ is  finite, then for   $i\in I$, the well-known   Frobenius-Perron theory on nonnegative matrices
 (see e.g. \cite[\S 2, Theorem 1.1]{BePl}) ensures that the spectral radius $\rho(\hat{\beta}_i)$ of $\hat \beta_i$, 
 $$\rho(\hat \beta_i):= \max\{|c|\mid c\mbox{ is an eigenvalue of } \hat\beta_i\}\in \mathbb{R}_{\geq 0},$$
is an eigenvalue of the linear operator $\hat\beta_i$.
\begin{defn}\label{fpdimstd} Let $A$ be a $\mathbb{Z}_{+}$-ring of finite rank.
The function ${\rm{FPdim}}={\rm{FPdim}}_A$,   
 $${\rm{FPdim}}: A\to \mathbb{C};\quad {\rm{FPdim}}\left(\sum\nolimits_ia_i\beta_i \right):=\sum\nolimits_ia_i\rho(\hat \beta_i),$$
  is called the \textbf{Frobenius-Perron dimension} of $A$.
\end{defn}
\noindent Furthermore, if $A$ is   transitive and unital, then ${\rm FPdim}: A\to \mathbb{C}$ is a ring homomorphism, so that the above notion  is consistent with that for commutative Fusion rings  introduced 
in \cite{FrKe} .
The theory of Frobenius-Perron dimensions for general fusion rings and categories was developed in \cite{ENO}.

 \begin{example}
 Let $\mathcal{C} = {\rm Rep}(G_{\rm fin})$ be the category of finite dimensional complex representations
of a finite group $G_{\rm fin}$, and   $A={\rm Gr}\big({\rm Rep}(G_{\rm fin})\big)$ be its Grothendieck ring.  Then for any $V\in \mathcal{C}, {\rm FPdim}([V])=\dim_{\mathbb{C}}(V)$.
 \end{example}

In order to explore an answer to Question \ref{ques1}, we propose the following definition.

\begin{defn}\label{defZ}
     We call a pair  $\big(A, \{(A_r, B_r)\}_{r\in \mathbb{Z}_{\geq 0}}\big)$
   a   $\mathbb{Z}_+^\bullet$-ring (and simply denote it as $A$), if $A$ is a free $\mathbb{Z}$-module and the   family   $\{(A_r, B_r)\}_{r\in \mathbb{Z}_{\geq 0}}$ satisfy the following.
    \begin{enumerate}
      \item $\{(A_r,+)\}_{r}$ is a  $\mathbb{Z}_{\geq 0}$-filtration of $(A, +)$ of free $\mathbb{Z}$-modules of finite rank;
      \item $(A_r, +, \star_r)$ is a $\mathbb{Z}_{+}$-ring    with fixed $\mathbb{Z}_{+}$-basis $B_r$;
      \item $B_r\subset B_{r'}$, for any $  r, r'\in \mathbb{Z}_{\geq 0}$ with $r<r'$;
      \item $ B_0$ contains an element $1$ which is the   identity of $(A_r, \star_r)$ for all $r$.
    \end{enumerate}
    We call a map $F: A\to \mathbb{C}$ a  $\mathbb{Z}_+^\bullet$-ring homomorphism, if there exists a family
    $\{F_r: (A_r, +, \star_r)\rightarrow (\mathbb{C}, +, \cdot)\}_r$ of   ring homomorphisms in the usual sense such that $\lim\limits_{r\to+\infty}F_r(\alpha)=F(\alpha)$ for any $\alpha\in A$ (where  $F_r(\alpha):=0$ if $\alpha\notin A_r$ for conventions).
\end{defn}

\begin{defn}\label{defmain} Let  $\big(A, \{(A_r, B_r)\}_{r\in \mathbb{Z}_{\geq 0}}\big)$ be a  $\mathbb{Z}_+^\bullet$-ring.
 Assume  $\lim\limits_{r\to +\infty} {\rm FPdim}_{A_r}(\beta)$ exits and belongs to $\mathbb{R}$ for all $\beta\in \bigcup_{r=0}^\infty B_r$. We denote
    ${\rm{FPd}^\bullet}(\beta):= \lim\limits_{r\to +\infty} {\rm FPdim}_{A_r}(\beta)$, and call its linear extension
    ${\rm FPd}^\bullet: A\to \mathbb{C}$  the  \textbf{Frobenius-Perron dimension} of $\big(A, \{(A_r, B_r)\}_{r\in \mathbb{Z}_{\geq 0}}\big)$
   if ${\rm FPd}^\bullet$ is a $\mathbb{Z}_+^\bullet$-ring homomorphism.
   \end{defn}

  \begin{example}
    Let $A$ be a $\mathbb{Z}_+$-ring  of finite rank with fixed $\mathbb{Z}_+$-basis $B$. Then $A$ is a $\mathbb{Z}_+^\bullet$-ring with respect to the trivial $\mathbb{Z}_{\geq 0}$-filtration $\{A_r\}$ defined by $A_r=A$ and $B_r=B$ for any $r\geq 0$. In this case, a $\mathbb{Z}_+^\bullet$-ring homomorphism is a ring homomorphism in the usual sense, and the generalized notion of Frobenius-Perron dimension
     of $A$ coincides with the standard one as defined in Definition \ref{fpdimstd}
   \end{example}
\begin{remark}
    The underlying $\mathbb{Z}$-module $A$ of a  $\mathbb{Z}_+^\bullet $-ring is not necessarily a ring a priori.
\end{remark}

\begin{example}\label{exainduced}
Let $A$ be a $\mathbb{Z}_+$-ring  equipped with  $\{(A_r, B_r)\}_r$ such  that {\upshape (a) } $1_A\in B_0$; {\upshape (b)} $B_r$ is an additive basis of $A_r$; and {\upshape (c)}  conditions (1) and (3) in Definition \ref{defZ} are satisfied. Then we can equip $A$   with a $\mathbb{Z}_+^\bullet$-ring structure, by considering the induced $\mathbb{Z}_+$-ring structure on $(A_r, +, \circ_r)$ defined by
  $$\alpha\circ_r \beta:= \pi_r (\alpha\circ \beta), \quad \forall \alpha, \beta\in A_r.$$
Here  $\pi_r: A\to A_r$ is the morphism of $\mathbb{Z}$-modules given by   $\pi_r(\beta)=\beta$ if $\beta \in B_r$, or $0$ if $\beta\in \bigcup_sB_s\setminus B_r$.
In this case, the limit
    $\lim\limits_{r\to +\infty} {\rm FPdim}_{A_r}(\beta)$ exists (possibly equal to $+\infty$) for any $\beta\in B$, by
     Frobenius-Perron theory (see e.g. \cite[\S 2, Corollary 1.6]{BePl}).
   \end{example}

\section{Quantum cohomology of Grassmannians}
Our generalized notion of Frobenius-Perron dimension is motivated by the study of the  quantum cohomology of the complex Grassmannian $Gr(k, n)=\{W\leqslant \mathbb{C}^n\mid \dim W =k\}$, where $k, n\in \mathbb{Z}_{>0}$ with $k<n$.  The readers with a strong interest in the computation of generalized Frobenius-Perron dimension for polynomial representation ring of unitary groups, can skip this section.

 Set $E^{(i)}_i=\mathbb{C}^i$ and $E^{(j)}_{i}=E^{(j)}_{i-1}\times\{0\}$ for any $i\leq j$.
Then $E^{(n)}_\bullet=\{E^{(n)}_1\leqslant E^{(n)}_2\leqslant\cdots \leqslant E^{(n)}_n\}$ is a complete flag in $E^{(n)}_n=\mathbb{C}^n$.
Let $\mathcal{P}_{k}(n)=\{\lambda=(\lambda_1, \cdots, \lambda_k)\in \mathbb{Z}^k\mid  n-k\geq \lambda_1\geq \cdots \geq \lambda_k\geq 0\}$.
For $\lambda\in \mathcal{P}_{k}(n)$, we denote $|\lambda|=\sum_{i=1}^k\lambda_i$ and $\lambda^\vee=(n-k-\lambda_k, \cdots, n-k-\lambda_1)$.

The  complex Grassmannian $Gr(k, n)$ has a class of closed subvarieties, called Schubert subvarieties that are defined by
 $$X_\lambda=X_\lambda(E_\bullet^{(n)})=\{W\in Gr(k, n)\mid \dim (W\cap E_{n-k+i-\lambda_i}^{(n)})\geq i, i=1,\ldots, k\}$$
where $X_\lambda$ is of codimension $|\lambda|$ for any partition $\lambda\in \mathcal{P}_k(n)$.
The cohomology classes $\{[X_\lambda]\in H^{2|\lambda|}(Gr(k, n), \mathbb{Z})\}$ form
a  $\mathbb{Z}_+$-basis of the integral cohomology ring $H^*(Gr(k, n))=H^*(Gr(k, n),\mathbb{Z})$, which is torsion free and of rank ${n\choose k}$.

The (small) quantum cohomology  $QH^*(Gr(k, n))=(H^*(Gr(k, n))\otimes \mathbb{Z}[q], \star)$ is  a commutative ring with the quantum product for any $\lambda, \mu\in \mathcal{P}_k(n)$ defined by
  $$[X_\lambda]\star [X_\mu]=\sum_{\nu\in \mathcal{P}_k(n), d\in \mathbb{Z}_{\geq 0}} N_{u, v}^{w, d}[X_\nu] q^d.$$
Here the Schubert structure constant $N_{u, v}^{w, d}$, known as a genus 0, 3-point Gromov-Witten invariant, counts the number of holomorphic maps $f: \mathbb{P}^1\to Gr(k, n)$ of degree $d$ with $f(0)\in X_\lambda,  f(1)\in g\cdot X_\mu$ and $f(\infty)\in g'\cdot X_{\nu^\vee}$ for generic (fixed) $g, g'\in GL(n, \mathbb{C})$. In particular, $N_{\lambda, \mu}^{\nu, d}$ is a non-negative integer for any $d$, and it vanishes for any sufficient large $d$.

 For any partition $\lambda\in\mathcal{P}_k(n)$, the quantum product by the Schubert class $[X_{\lambda}]$ induces a linear operator
$$\widehat{[X_\lambda]}: QH^*(Gr(k, n))|_{q=1}\longrightarrow QH^*(Gr(k, n))|_{q=1};\, \beta\mapsto [X_\lambda]\star \beta|_{q=1}.$$
Its eigenvalues and eigenvectors have been well studied by Rietsch \cite{Riet} in terms of Schur functions and primitive $n$-th roots of unity. In particular,
the following lemma follows immediately from Theorem  8.4 (1) and section 11 of \cite{Riet}
\begin{lemma}\label{lemspec}
  Let $\lambda\in \mathcal{P}_k(n)$, and  let $\rho_{k, \lambda}(n)$ denote the spectral radius of $\widehat{[X_\lambda]}$, namely
  $$\rho_{k, \lambda}(n)=\max\{|c|\mid c \mbox{ is an eigenvalue of the operator } \widehat{[X_\lambda]}\mbox{ on } QH^*(Gr(k, n))|_{q=1}\}.$$
Denote {\upshape $\mbox{hl}(i, j)=\lambda_i+\lambda_j^t-i-j+1$}, where $\lambda^t=(\lambda_1^t, \cdots, \lambda_{n-k}^t)$ denotes the transpose partition of
  $\lambda$.  Then we have
{\upshape \begin{equation}\label{spec}
   \rho_{k, \lambda}(n)={\prod_{(i,j)\in \lambda} \sin\big((k-i+j){\pi\over n}\big)\over \prod_{(i,j)\in \lambda} \sin\big(\mbox{hl}(i,j){\pi\over n}\big)},
\end{equation}
}
\end{lemma}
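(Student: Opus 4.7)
The plan is to read off the eigenvalues of $\widehat{[X_\lambda]}$ from Rietsch's explicit diagonalization of quantum multiplication on $QH^*(Gr(k,n))|_{q=1}$, and then to identify which of them has the largest modulus. Concretely, Rietsch's Theorem~8.4(1) gives a set of simultaneous eigenvectors of all the operators $\widehat{[X_\mu]}$ indexed by the $\binom{n}{k}$ size-$k$ subsets $T=\{\zeta^{t_1},\dots,\zeta^{t_k}\}$ of the $n$-th roots of unity (where $\zeta=e^{2\pi i/n}$), and the eigenvalue of $\widehat{[X_\lambda]}$ on the eigenvector indexed by $T$ is the value $s_\lambda(\zeta^{t_1},\dots,\zeta^{t_k})$ of the Schur polynomial $s_\lambda$ at the roots comprising $T$. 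So first I would recall that statement and note that $\rho_{k,\lambda}(n)=\max_T|s_\lambda(\zeta^{t_1},\dots,\zeta^{t_k})|$.

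Next I would identify the maximizing subset. Using the bialternant form $s_\lambda(x_1,\dots,x_k)=\det(x_i^{\lambda_j+k-j})/\det(x_i^{k-j})$, each value $s_\lambda(\zeta^{t_1},\dots,\zeta^{t_k})$ can be rewritten, up to unit modulus phases, as a ratio of sines through the factorization $|\zeta^a-\zeta^b|=2|\sin((a-b)\pi/n)|$. This makes it natural to expect (and this is exactly what Rietsch's Section~11 makes precise) that the maximum is attained at the ``clustered'' configuration $T_0=\{\zeta^1,\zeta^2,\dots,\zeta^k\}$, i.e.\ $k$ consecutive roots of unity. I would invoke this as the content of Rietsch's Section~11: the totally positive critical point of the Landau-Ginzburg mirror corresponds to $T_0$, and at this point $s_\lambda$ takes a positive real value which dominates the modulus of all other critical values.

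Finally, I would evaluate $s_\lambda$ at $T_0$ and convert the resulting ratio of Vandermonde-type products into the claimed sine quotient. Substituting $x_i=\zeta^i$ into the bialternant and extracting the common phase from each factor $\zeta^{a}-\zeta^{b}=e^{i\pi(a+b)/n}\cdot 2i\sin((a-b)\pi/n)$ gives
\[
s_\lambda(\zeta,\zeta^2,\dots,\zeta^k)=\prod_{(i,j)\in\lambda}\frac{\sin\bigl((k-i+j)\pi/n\bigr)}{\sin\bigl(\mathrm{hl}(i,j)\pi/n\bigr)}
\]
after the standard cancellation that turns a Vandermonde ratio into a product over the cells of $\lambda$, parallel to Stanley's hook-content derivation. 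Taking absolute values yields the stated formula \eqref{spec}.

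The routine part is the final algebraic manipulation of sines, which is a classical identity and needs only careful bookkeeping of the phases. The genuinely substantive step is the second one: proving that the maximum modulus among the $\binom{n}{k}$ complex numbers $s_\lambda(\zeta^{t_1},\dots,\zeta^{t_k})$ is attained at the consecutive subset $T_0$. This is the main obstacle, and it is exactly the issue addressed in Section~11 of \cite{Riet} via the positivity of the totally positive critical point of the superpotential; I would therefore cite that passage rather than reprove it from scratch.
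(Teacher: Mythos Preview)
Your proposal is correct and follows exactly the paper's approach: the paper does not give a standalone proof but simply states that the lemma ``follows immediately from Theorem~8.4(1) and Section~11 of \cite{Riet}.'' You have unpacked precisely those two citations (the Schur-value description of the eigenvalues, and the identification of the Perron--Frobenius eigenvalue with the consecutive-root subset $T_0$) and supplied the routine sine/hook-content computation that turns $s_\lambda(\zeta,\dots,\zeta^k)$ into formula~\eqref{spec}.
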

\begin{example} The number {\upshape $\mbox{hl}(i, j)$} equals the \textit{hook length} of the box labeled by $(i, j)$ in the Young diagram of the partition $\lambda$. The following figure shows the case $\lambda=(6,4,2,1)\in \mathcal{P}_4(11)$, for which $\lambda^t=(4,3,2,2,1,1,0)$.
 \begin{figure}[h]
     \includegraphics[scale=1]{hlength.1}
  \end{figure}
\end{example}
\begin{remark}
The number $\rho_{k,(1,0,\cdots,0)} (n)$ is the length of the $k$-th diagonal of a regular $n$ sided polygon with unit side length.
\end{remark}

Notice that there are   $|\lambda|$ boxes in the $i$-th row of the  {Young diagram} of the partition $\lambda$.
The sequence  $\{k-i+j\}_{(i, j)\in \lambda}$ of length $|\lambda|$ can be reordered as $\{a_1,\cdots, a_{|\lambda|}\}$ such that
$a_r= k-i+\lambda_i-j+1$ correspondes to  a unique $(i, j)\in \lambda$. For the same $(i, j)$, we denote $b_r=\mbox{hl}(i, j)$. Then $a_r\geq b_r$ for all $1\leq r\leq |\lambda|$, since
   $$k-i+\lambda_i-j+1\geq \lambda_j^t+\lambda_i-i-j+1= \mbox{hl}(i, j), \mbox{ for any }(i, j)\in \lambda.$$
By using formula $\eqref{spec}$, we can define a smooth function $\rho_{k, \lambda}: \mathbb{R}^+\to \mathbb{R}$. The next property is an immediate consequence of the definitions of $a_r$'s and $b_r$'s.
\begin{cor}\label{corlambdaeq} For any $\lambda\in \mathcal{P}_k(n)$, the following are equivalent.
  {\upshape $$\mbox{i) } \rho_{k, \lambda}(x)=1; \quad \mbox{ii) } a_r=b_r \mbox{ for all } 1\leq r\leq |\lambda|;\quad \mbox{iii) }\lambda_1=\lambda_2=\cdots=\lambda_k.$$}
\end{cor}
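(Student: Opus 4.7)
The plan is to prove the cyclic chain (iii) $\Rightarrow$ (ii) $\Rightarrow$ (i) $\Rightarrow$ (ii) $\Rightarrow$ (iii), where the two easy implications together with a short asymptotic argument handle (i) $\Leftrightarrow$ (ii), and the combinatorics of hook lengths handles (ii) $\Leftrightarrow$ (iii).

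First, for (iii) $\Rightarrow$ (ii), I assume $\lambda_1=\cdots=\lambda_k=m$, so the transpose partition is $\lambda_j^t=k$ for $1\leq j\leq m$. Then for every box $(i,j)\in\lambda$ one has $\mbox{hl}(i,j)=\lambda_i+\lambda_j^t-i-j+1=m+k-i-j+1$, which matches $k-i+\lambda_i-j+1=m+k-i-j+1$. Hence $a_r=b_r$ for each $r$. The implication (ii) $\Rightarrow$ (i) is then automatic: if $a_r=b_r$ for each $r$, the numerator and denominator of \eqref{spec} agree term by term, so $\rho_{k,\lambda}(x)=1$ as a function on $\mathbb{R}^+$ (at all points where both products are defined and nonzero).

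The key step is (i) $\Rightarrow$ (ii). The plan is to exploit the asymptotics of $\rho_{k,\lambda}(x)$ as $x\to+\infty$. Since $\sin(c\pi/x)=c\pi/x+O(1/x^3)$, the identity
\[
\prod_{r=1}^{|\lambda|}\sin\!\bigl(a_r\pi/x\bigr)=\prod_{r=1}^{|\lambda|}\sin\!\bigl(b_r\pi/x\bigr)
\]
for all large $x$ forces, by comparing leading coefficients of $1/x^{|\lambda|}$, the equality $\prod_r a_r=\prod_r b_r$. The text has already recorded that $a_r\geq b_r$ for every $r$, and both quantities are positive integers (since $(i,j)\in\lambda$ means $1\leq i\leq k$ and $1\leq j\leq\lambda_i$, so $k-i+\lambda_i-j+1\geq 1$, while hook lengths of boxes are always positive). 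An equality of products of positive reals under termwise domination forces termwise equality, hence $a_r=b_r$ for all $r$. This is the only step where actual analysis (as opposed to bookkeeping) enters, and it is the one I would be most careful about: one must make sure the products really are interpreted as an identity of functions, or at the very least at enough values of $x$ for the leading-order comparison to be meaningful.

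Finally, for (ii) $\Rightarrow$ (iii), rewriting $a_r=b_r$ using the definitions gives
\[
k-i+\lambda_i-j+1=\lambda_j^t+\lambda_i-i-j+1,
\]
which simplifies to $\lambda_j^t=k$, and this must hold for every $(i,j)\in\lambda$. The column indices that actually appear are $j=1,\ldots,\lambda_1$, so $\lambda_j^t=k$ for all $1\leq j\leq\lambda_1$, meaning every nonempty column of $\lambda$ has exactly $k$ boxes. This is equivalent to $\lambda$ being a rectangle, i.e.\ $\lambda_1=\lambda_2=\cdots=\lambda_k$, completing the cycle.
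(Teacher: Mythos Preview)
Your argument is correct and matches what the paper intends: the paper does not actually give a proof, stating only that the corollary ``is an immediate consequence of the definitions of $a_r$'s and $b_r$'s,'' and your write-up is a faithful unpacking of that remark via the key inequality $a_r\geq b_r$.

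One minor simplification for the step (i) $\Rightarrow$ (ii): instead of comparing Taylor coefficients, you can pick any single $x>2(k+\lambda_1-1)$, so that every argument $a_r\pi/x$ and $b_r\pi/x$ lies in $(0,\pi/2)$ where $\sin$ is strictly increasing; then $\sin(a_r\pi/x)\geq\sin(b_r\pi/x)$ term by term, and equality of the products forces $a_r=b_r$ for each $r$ directly. This removes the need to treat (i) as a functional identity and makes the argument fully elementary, though your asymptotic version is also valid.
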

\begin{lemma}[Hook-length formula; see page 61 of \cite{Robi}]\label{lemhook}
   Let $\mathbb{S}_\lambda(V)$ denote the irreducible representation of $U(k)$ associated to $\lambda$. Then
    {\upshape $$\dim \mathbb{S}_\lambda(V)=   {\prod_{(i,j)\in \lambda}  (k-i+j) \over \prod_{(i,j)\in \lambda} \mbox{hl}(i,j)}.
$$}
\end{lemma}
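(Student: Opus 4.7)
The plan is to derive the formula from Weyl's classical dimension formula for $U(k)$ and then massage the result into the stated hook-content form. The combinatorial heart of the argument is a well-known identity expressing the product of hook lengths in terms of the $\beta$-numbers of the partition, after which only algebraic simplification remains.

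Step 1: Apply Weyl's dimension formula. For a dominant weight $\lambda=(\lambda_1,\ldots,\lambda_k)$ with $\lambda_1\geq\cdots\geq\lambda_k\geq 0$, one has
\[
\dim \mathbb{S}_\lambda(V)=\prod_{1\leq i<j\leq k}\frac{\lambda_i-\lambda_j+j-i}{j-i}.
\]
Introduce the $\beta$-numbers $\mu_i:=\lambda_i+k-i$ for $1\leq i\leq k$; they are strictly decreasing. Since $\lambda_i-\lambda_j+j-i=\mu_i-\mu_j$ and $\prod_{i<j}(j-i)=\prod_{i=1}^{k}(k-i)!$, the Weyl formula rewrites as
\[
\dim \mathbb{S}_\lambda(V)=\frac{\prod_{1\leq i<j\leq k}(\mu_i-\mu_j)}{\prod_{i=1}^{k}(k-i)!}.
\]

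Step 2: Establish the hook-length identity
\[
\prod_{(i,j)\in\lambda}\mbox{hl}(i,j)=\frac{\prod_{i=1}^{k}\mu_i!}{\prod_{1\leq i<j\leq k}(\mu_i-\mu_j)}.
\]
The key combinatorial fact I would verify is that the multiset $\{\mbox{hl}(i,j) : 1\leq j\leq\lambda_i\}$ of hook lengths in the $i$-th row of $\lambda$ coincides with $\{1,2,\ldots,\mu_i\}\setminus\{\mu_i-\mu_s : i<s\leq k\}$. This bijection is classical and can be checked directly by examining, for each integer $m\in\{1,\ldots,\mu_i\}$, whether there is a box in row $i$ whose hook descends into row $s$ such that $m=\mu_i-\mu_s$. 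Taking the product over the row then gives $\prod_{j\leq\lambda_i}\mbox{hl}(i,j)=\mu_i!/\prod_{s>i}(\mu_i-\mu_s)$, and multiplying over $i=1,\ldots,k$ yields the displayed identity. This step is the main obstacle; once it is in hand the rest is formal.

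Step 3: Combine the two displays. Substituting the identity of Step 2 into that of Step 1 produces
\[
\dim \mathbb{S}_\lambda(V)=\frac{\prod_{i=1}^{k}\mu_i!/(k-i)!}{\prod_{(i,j)\in\lambda}\mbox{hl}(i,j)}.
\]
Finally, expanding $\mu_i!/(k-i)!=(\lambda_i+k-i)(\lambda_i+k-i-1)\cdots(k-i+1)=\prod_{j=1}^{\lambda_i}(k-i+j)$ and collecting over $i$ gives $\prod_{i=1}^{k}\mu_i!/(k-i)!=\prod_{(i,j)\in\lambda}(k-i+j)$, which establishes the claimed hook-content formula.
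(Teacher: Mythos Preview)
Your argument is correct: Weyl's dimension formula gives the Vandermonde-type quotient in Step~1, the classical $\beta$-number identity in Step~2 is standard (and your sketch of the row-by-row bijection is accurate, including in the degenerate case $\lambda_i=0$, where $\mu_i=k-i$ and the removed differences exactly account for $(k-i)!$), and the algebra in Step~3 closes the loop.

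As for comparison with the paper: there is nothing to compare. The paper does not prove this lemma; it simply quotes it as the hook-length formula, citing page~61 of Robinson's book. Your write-up therefore supplies a self-contained derivation where the paper relies on an external reference. The approach you take---Weyl's formula combined with the Frame--Robinson--Thrall hook identity---is in fact the classical one, so in spirit it matches what one would find in the cited source.
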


\begin{thm}\label{thmmain} For   $\lambda\in\mathcal{P}_{k}(n)$, we have
     $$\lim\limits_{x\to +\infty} \rho_{k, \lambda}(x)= \dim \mathbb{S}_\lambda(V).$$

\end{thm}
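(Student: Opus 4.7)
The proof should be a short limit computation: both the spectral radius (Lemma \ref{lemspec}) and the dimension of $\mathbb{S}_\lambda(V)$ (Lemma \ref{lemhook}) are expressed as ratios of products indexed by the same set of boxes $(i,j)\in\lambda$, and the spectral-radius expression degenerates to the dimension expression once the sines are linearized. So the plan is to apply $\sin(t)=t+O(t^3)$ as $t\to 0$ to each factor of the formula in Lemma \ref{lemspec}.

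More concretely, first I would record that the Young diagram of $\lambda$ contains exactly $|\lambda|$ boxes, and that for each $(i,j)\in\lambda$ both of the integers $k-i+j$ and $\mathrm{hl}(i,j)$ are positive (the first because $j\geq 1$ and $i\leq k$, the second because $(i,j)\in\lambda$). This guarantees that for all sufficiently large real $x$ every sine appearing in the numerator and denominator of the formula
$$\rho_{k,\lambda}(x)=\frac{\prod_{(i,j)\in\lambda}\sin\!\bigl((k-i+j)\tfrac{\pi}{x}\bigr)}{\prod_{(i,j)\in\lambda}\sin\!\bigl(\mathrm{hl}(i,j)\tfrac{\pi}{x}\bigr)}$$
is strictly positive and close to $0$, so no zero/indeterminacy issues arise.

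Next I would use $\sin(c\pi/x)=c\pi/x+O(x^{-3})$ for each fixed integer $c$. Since the numerator and denominator each contain exactly $|\lambda|$ sine factors, pulling out a common factor of $(\pi/x)^{|\lambda|}$ from the top and bottom cancels, and the remainder terms vanish. This gives
$$\lim_{x\to+\infty}\rho_{k,\lambda}(x)=\frac{\prod_{(i,j)\in\lambda}(k-i+j)}{\prod_{(i,j)\in\lambda}\mathrm{hl}(i,j)}.$$

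Finally I would invoke the hook-length formula (Lemma \ref{lemhook}) to identify the right-hand side as $\dim\mathbb{S}_\lambda(V)$. There is no real obstacle here beyond making sure the limit is justified factor-by-factor, which is routine because the number of factors is fixed (depending only on $\lambda$, not on $x$) and each factor has a positive leading coefficient in its Taylor expansion.
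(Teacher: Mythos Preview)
Your proposal is correct and is exactly the approach the paper takes: it simply says the statement ``follows immediately from Lemmas \ref{lemspec} and \ref{lemhook},'' which amounts to precisely the linearization $\sin(c\pi/x)\sim c\pi/x$ you describe, with the equal number of factors top and bottom making the $(\pi/x)^{|\lambda|}$ cancel. You have just written out the one-line implicit step in more detail.
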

\begin{proof}
   The statement follows immediately from  and Lemmas \ref{lemspec} and \ref{lemhook}.
\end{proof}
\begin{thm}\label{thmmain2} For   $\lambda\in\mathcal{P}_{k}(n)$ with $\lambda_i\neq \lambda_j$ for some $i\neq j$, the following hold.
  \begin{enumerate}
      \item We have the inequality $$\rho_{k, \lambda}(n)\geq \dim \mathbb{S}_{\lambda}(V)\prod_{(i, j)\in \lambda} \left(1- \frac{\pi^2 (k-i+j)^2}{6n^2} \right).$$

     \item  The   function  $\rho_{k, \lambda}(x)$ is strictly increasing on the interval $(k+\lambda_1-1,+\infty)$, and is         concave down when $x$ is sufficiently large.
  \end{enumerate}
\end{thm}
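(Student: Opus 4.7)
The plan is to express both $\rho_{k,\lambda}(x)$ and $\dim\mathbb{S}_\lambda(V)$ through the product formulas of Lemmas~\ref{lemspec} and~\ref{lemhook}, matched by the paired reindexing $\{(a_r,b_r)\}_{r=1}^{|\lambda|}$ with $a_r \geq b_r$ already set up in the paper. After this matching,
\begin{equation*}
\rho_{k,\lambda}(x) = \prod_{r}\frac{\sin(a_r\pi/x)}{\sin(b_r\pi/x)},\qquad \dim\mathbb{S}_\lambda(V) = \prod_{r}\frac{a_r}{b_r},
\end{equation*}
so that the ratio $\rho_{k,\lambda}(x)/\dim\mathbb{S}_\lambda(V)$ breaks into $|\lambda|$ matched factors of the form $b_r\sin(a_r\pi/x)/(a_r\sin(b_r\pi/x))$.

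For part~(1), I would bound each factor from below using the elementary two-sided estimate $y - y^3/6 \leq \sin(y) \leq y$ for $y \geq 0$; the lower bound follows since $f(y) = \sin y - y + y^3/6$ has $f(0) = f'(0) = 0$ and $f''(y) = y - \sin y \geq 0$. Applying the lower bound to the numerator and the upper bound to the denominator yields the per-factor inequality $b_r\sin(a_r\pi/n)/(a_r\sin(b_r\pi/n)) \geq 1 - a_r^2\pi^2/(6n^2)$. Taking the product over $r$, and using that $\{a_r\}_r$ equals $\{k-i+j : (i,j)\in\lambda\}$ as multisets, produces the claimed bound (when any right-hand factor is negative the inequality becomes trivially true since $\rho_{k,\lambda}(n) > 0$).

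For part~(2), monotonicity would come from the logarithmic derivative
\begin{equation*}
(\log\rho_{k,\lambda})'(x) = \frac{\pi}{x^2}\sum_r\bigl(b_r\cot(b_r\pi/x) - a_r\cot(a_r\pi/x)\bigr),
\end{equation*}
which reduces to showing $y \mapsto y\cot(y\pi/x)$ is strictly decreasing on $(0,x)$. A direct computation gives $\frac{d}{dy}(y\cot(y\pi/x))\cdot\sin^2(y\pi/x) = \tfrac{1}{2}\sin(2y\pi/x) - y\pi/x$, which is strictly negative on $(0,x)$ because $\sin(2u) < 2u$ holds on $(0,\pi)$ (trivial on $(\pi/2,\pi)$ where $\sin(2u) \leq 0$). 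The non-constancy hypothesis forces $\lambda_1 > \lambda_k$, and at the box $(1,\lambda_1)$ the identity $a_r - b_r = k - \lambda^t_{\lambda_1}$ gives a strictly positive value, producing at least one strictly decreasing summand, hence $(\log\rho_{k,\lambda})'(x) > 0$. The restriction $x > k + \lambda_1 - 1$ ensures every $a_r\pi/x \in (0,\pi)$, so all sines are positive and the derivative identity is valid.

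For the concavity claim, I would Taylor-expand $\sin(y)/y = 1 - y^2/6 + O(y^4)$ factor by factor to get
\begin{equation*}
\rho_{k,\lambda}(x) = \dim\mathbb{S}_\lambda(V)\left(1 - \frac{\pi^2 C}{6x^2} + O(1/x^4)\right),\qquad C = \sum_r(a_r^2 - b_r^2) > 0,
\end{equation*}
so $\rho''_{k,\lambda}(x) = -\pi^2 C\cdot\dim\mathbb{S}_\lambda(V)/x^4 + O(1/x^6) < 0$ for $x$ sufficiently large. The main technical subtlety I anticipate is justifying strict monotonicity of $y\cot(y\pi/x)$ across the sign change of $\cot$ at $y = x/2$; the uniform reduction to $\sin(2u) < 2u$ on all of $(0,\pi)$ handles both regimes in one stroke, which is the cleanest route I can find.
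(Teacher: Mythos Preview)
Your proposal is correct and follows essentially the same route as the paper: the same $(a_r,b_r)$ pairing, the same sine inequalities $y-y^3/6\le\sin y\le y$ for part~(1), and the same Taylor expansion in $1/x$ for the concavity in part~(2). The only cosmetic difference is in the monotonicity step, where the paper proves the auxiliary Lemma~\ref{lemTay} by showing the numerator $g(x)=-a\cos(a/x)\sin(b/x)+b\sin(a/x)\cos(b/x)$ of $f'$ satisfies $g'<0$ and $g(\infty)=0$, whereas you reduce to the decrease of $y\mapsto y\cot(y\pi/x)$; these are the same computation viewed in the $x$-variable versus the $y$-variable, and your version is arguably a touch cleaner.
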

\begin{proof}
  We leave the details in the appendix.
\end{proof}
\begin{remark}\label{rmkk}
 The above lower bound of $\rho_{k, \lambda}(n)$ works for any $1\leq k<n$ and $\lambda\in \mathcal{P}_k(n)$.  There is a natural isomorphism $QH^*(Gr(k, n))\longrightarrow QH^*(Gr(n-k, n))$ that sends every Schubert class $[X_\lambda]$ for $Gr(k, n)$
 to the Schubert class $[X_{\lambda^t}]$ of $Gr(k, n)$ labeled by the transpose $\lambda^t$ of the partition $\lambda\in \mathcal{P}_k(n)$. As a consequence, we have
  $$\rho_{k, \lambda}(n)=\rho_{n-k, \lambda^t}(n)\geq \dim\mathbb{S}_{\lambda}(V^t) \prod_{(i, j)\in \lambda^t} \left(1- \frac{\pi^2 (n-k-i+j)^2}{6n^2} \right)$$
  { where $\mathbb{S}_{\lambda^t}(V^t)$ denotes the irreducible representation of $U(n-k)$ associated to $\lambda^t$.}
\end{remark}

\begin{example}\label{exak2}
   For $k=2$, we have $\dim \mathbb{S}_\lambda(V)=\lambda_1-\lambda_2+1$. Moreover, by  simplifying formula \eqref{spec}, we  obtain $\displaystyle \rho_{2, \lambda}(n)={\sin {(\lambda_1-\lambda_2+1)\pi\over n}\over \sin {\pi \over n}}$. It follows from Theorem \ref{thmmain2} (1) that
      $\rho_{2, \lambda}(n)\geq (\lambda_1-\lambda_2+1)\left(1-{\pi^2(\lambda_1+1)^2\over 6n^2}\right)$. In particular,
     $$\rho_{2, (1, 0)}(n)\geq 2\left(1-{4\pi^2\over 6 n^2}\right)>2\left(1-{ 3\over 2n}\right)={k}{n-k\over n}+{1\over n}\quad\mbox{for }n>4,$$
      $ \rho_{2, (1, 0)}(3)={\sin {2\pi\over 3}\over \sin {\pi\over 3}}=1={2}{3-2\over 3}+{1\over 3}$, and
        $ \rho_{2, (1, 0)}(4)={\sin {2\pi\over 4}\over \sin {\pi\over 4}}=\sqrt{2}>{2}{4-2\over 4}+{1\over 4}$.
\end{example}
\begin{example}
   For $\lambda=(1,0,\cdots, 0)$, we notice $\dim \mathbb{S}_\lambda(V)=k$ so that
      $$\rho_{k, \lambda}(n)\geq k\left(1-{\pi^2k^2\over 6n^2}\right) >{k}{n-k\over n}+{1\over n} \mbox{ for  } 3\leq k\leq {n\over 2} $$
      by  Theorem \ref{thmmain2} (1).
Together with Remark \ref{rmkk} and Example \ref{exak2}, this show that
$$n\rho_{k, (1, 0,\cdots, 0)}(n)\geq \dim Gr(k, n)+1=k(n-k)+1$$
with the equality holding if and only if $k=1$ or $n-1$.
This is exactly the proof of Galkin's lower bound conjecture   for all $Gr(k, n)$ as was given in \cite{ESSSW}.
\end{example}

\section{Frobenius-Perron dimension of polynomial representation ring of unitary groups}
     As in the introduction, we consider the Grothendieck ring ${\rm Gr}\big({\rm Rep}(U(k))_{+}\big)$ of the subcategory ${\rm Rep}(U(k))_{+}$ of finite dimensional complex representations of $U(k)$ generated by the isomorphism classes of irreducible representations $\mathbb{S}_\lambda(V)$ with $\lambda\in \bigcup_{r=0}^\infty\mathcal{P}_k(k+r)$, namely $\lambda=(\lambda_1,\cdots, \lambda_k)\in \mathbb{Z}^k$ satisfying $\lambda_1\geq \cdots\lambda_k\geq 0$. The Grothendieck ring $A={\rm Gr}\big({\rm Rep}(U(k))_{+}\big)$, also  referred   to as the polynomial representation ring of $U(k)$,
    is a $\mathbb{Z}_+$-ring of infinite rank. Moveover, it admits a standard $\mathbb{Z}_{\geq 0}$-filtration $\{A_r\}_r$ of free $\mathbb{Z}$-modules, with  $B_r:=\{[\mathbb{S}_\lambda(V)]\mid \lambda\in \mathcal{P}_k(k+r)\}$ being a basis of $A_r$ for each $r$.
 As to be described below, the  $\mathbb{Z}_{\geq 0}$-filtration $\{A_r\}_r$ can be equipped with different ring structures, for which $A$ becomes a $\mathbb{Z}_+^\bullet$-ring.
\subsection{Ring structures induced from the tensor product}

Consider the  morphism $\pi_{r}$ of $\mathbb{Z}$-modules given by  $\pi_{r}: A\to A_r$ with $\pi_{r}([\mathbb{S}_\lambda(V)])=
 [\mathbb{S}_\lambda(V)]$ if $\lambda\in \mathcal{P}_k(k+r)$, or $0$ otherwise.
Notice that the ring structure of $A$ is given by  tensor product of representations. This induces a natural ring structure $(A_r, \circ_r)$ by
  $$ [\mathbb{S}_\lambda(V)]\circ [\mathbb{S}_\mu(V)]=\pi_{r}([\mathbb{S}_\lambda(V)\otimes  \mathbb{S}_\mu(V)]),\quad \forall \lambda, \mu\in \mathcal{P}_k(k+r).$$
Then $(A_r, \circ_r)$ is a $\mathbb{Z}_+$-ring of finite rank with the identity  $\pi_{r}([\mathbf{1}])$, where
 $\mathbf{1}$ denote the trivial representation of $U(k)$, whose   isomorphic class  is the identity of  $A$.
  Consequently, the limit
  ${\rm FPd}^\bullet([\mathbb{S}_\lambda(V)])= \lim\limits_{r\to+\infty}{\rm FPdim}_{A_r}([\mathbb{S}_\lambda(V)])$  exists for any $\lambda$ with $\lambda_k\geq 0$ (for instance by \cite[\S 2, Corollary 1.6]{BePl}).  More is true: there is a   ring isomorphism  (see e.g. \cite[section 9.4]{Fult})
    $$(A_r, \circ_r)\longrightarrow (H^*(Gr(k, k+r)),\cup); [\mathbb{S}_\lambda(V)]\mapsto [X_\lambda].$$
  For any $\lambda\neq \mathbf{0}$, the cup product of any cohomology class by $[X_\lambda]$ increases the degree, and hence the operator $[X_\lambda]\cup$ is nilpotent. Therefore for any $r$ and any $\lambda\in \mathcal{P}_k(k+r)$, we have ${\rm FPd}^\bullet([\mathbb{S}_\lambda(V)])={\rm FPdim}_{A_r}([\mathbb{S}_\lambda(V)])=1$ if $\lambda=\mathbf{0}$, or $0$ otherwise. In a summary, we have the following.

  \begin{thm}
     The   $\mathbb{Z}$-module $A{={\rm Gr}\big({\rm Rep}(U(k))_{+}\big)}$ equipped with the family $\{(A_r, B_r)\}_r$ is a $\mathbb{Z}_+^\bullet$-ring.
     The family $\{{\rm FPdim}_{A_r}: A_r\to \mathbb{C}\}_r$ and the resulting in map ${\rm FPd}^\bullet: (A, \circ) \to \mathbb{C}$ are all trivial ring homomorphisms. In particular, ${\rm FPd}^\bullet$ is
     the  Frobenius-Perron dimension of the $\mathbb{Z}_+^\bullet$-ring $\big(A, \{(A_r, B_r)\}_r\big)$.
  \end{thm}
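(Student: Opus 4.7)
The plan is to verify the four conditions of Definition \ref{defZ} in order, then identify each ${\rm FPdim}_{A_r}$ as the trivial ring homomorphism via the classical isomorphism with the cohomology of a Grassmannian, and finally pass to the limit to read off ${\rm FPd}^\bullet$.

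First I would check conditions (1)--(4) of Definition \ref{defZ} directly. For (1) and (3), the inclusion $\mathcal{P}_k(k+r)\subset \mathcal{P}_k(k+r+1)$ gives $B_r\subset B_{r+1}$, so $\{A_r\}$ is a $\mathbb{Z}_{\geq 0}$-filtration of $A$ by free $\mathbb{Z}$-modules of finite rank $\binom{k+r}{k}$. For (4), the class $[\mathbf{1}]=[\mathbb{S}_{\mathbf{0}}(V)]$ lies in $B_0$; since $\mathbf{1}\otimes \mathbb{S}_\lambda(V)\cong \mathbb{S}_\lambda(V)$ already belongs to $B_r$ whenever $\lambda\in\mathcal{P}_k(k+r)$, the projection $\pi_r$ acts as the identity and $\pi_r([\mathbf{1}])$ is a unit for $\circ_r$. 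For condition (2), that $(A_r,\circ_r)$ is a genuine $\mathbb{Z}_+$-ring with basis $B_r$, I would invoke Example \ref{exainduced}: the nonnegativity of structure constants is inherited from those of the tensor product since $\pi_r$ just deletes basis elements outside $B_r$. Alternatively, one can use the ring isomorphism $(A_r,\circ_r)\cong (H^*(Gr(k,k+r)),\cup)$, $[\mathbb{S}_\lambda(V)]\mapsto [X_\lambda]$, cited from \cite[\S 9.4]{Fult}, under which the Littlewood--Richardson coefficients give the required nonnegativity.

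Next I would compute ${\rm FPdim}_{A_r}$ using this isomorphism. For $\lambda=\mathbf{0}$, the operator $\widehat{[\mathbb{S}_{\mathbf{0}}(V)]}$ is the identity on $A_r$ and has spectral radius $1$. For any nonzero $\lambda\in\mathcal{P}_k(k+r)$, the cup product $[X_\lambda]\cup$ strictly raises cohomological degree on the finite-dimensional graded ring $H^*(Gr(k,k+r))$, hence is nilpotent, so its spectral radius is $0$. Thus
\[
{\rm FPdim}_{A_r}([\mathbb{S}_{\mathbf{0}}(V)])=1,\qquad {\rm FPdim}_{A_r}([\mathbb{S}_\lambda(V)])=0\text{ for }\lambda\neq\mathbf{0}.
\]
Linear extension yields the augmentation-type map, whose compatibility with $\circ_r$ is immediate on basis elements: a product of two non-identity basis elements lies in the augmentation ideal (being a nonnegative combination of $[\mathbb{S}_\nu(V)]$ with $\nu\neq\mathbf{0}$), so both sides evaluate to $0$. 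Therefore each ${\rm FPdim}_{A_r}$ is a ring homomorphism, namely the trivial one.

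Finally I would pass to the limit. Since the value ${\rm FPdim}_{A_r}([\mathbb{S}_\lambda(V)])$ is independent of $r$ as soon as $\lambda\in \mathcal{P}_k(k+r)$, the limit
\[
{\rm FPd}^\bullet([\mathbb{S}_\lambda(V)])=\lim_{r\to+\infty}{\rm FPdim}_{A_r}([\mathbb{S}_\lambda(V)])
\]
exists and equals $1$ when $\lambda=\mathbf{0}$ and $0$ otherwise. Setting $F_r:={\rm FPdim}_{A_r}$ provides exactly the family of ring homomorphisms required by Definition \ref{defZ} to witness that ${\rm FPd}^\bullet$ is a $\mathbb{Z}_+^\bullet$-ring homomorphism, so by Definition \ref{defmain} it is the Frobenius-Perron dimension of $(A,\{(A_r,B_r)\}_r)$. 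The only genuinely nontrivial input is step two --- the nilpotency of $[X_\lambda]\cup$, equivalently the existence of the Fulton isomorphism $A_r\cong H^*(Gr(k,k+r))$ --- and this has already been quoted; everything else is bookkeeping with the definitions.
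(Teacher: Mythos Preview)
Your proof is correct and follows essentially the same approach as the paper: the argument there (given inline just before the theorem) also invokes the ring isomorphism $(A_r,\circ_r)\cong H^*(Gr(k,k+r))$ from \cite{Fult} and the nilpotency of $[X_\lambda]\cup$ for $\lambda\neq\mathbf{0}$ to conclude ${\rm FPdim}_{A_r}([\mathbb{S}_\lambda(V)])$ equals $1$ or $0$. Your write-up is somewhat more explicit in verifying conditions (1)--(4) of Definition~\ref{defZ} and in checking that each ${\rm FPdim}_{A_r}$ is multiplicative, but the substance is the same.
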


\subsection{Verlinde algebras}
 The free $\mathbb{Z}$-module $A_r$ can be equipped with the fusion ring structure $\star_r$ at level $(r, k+r)$, called the Verlinde algebra (of $U(k)$) at level $(r, k+r)$ in the physics literature. We   follow \cite{Belk} for the description of $\star_r$ below.

 The irreducible representation $\mathbb{S}_\lambda(V)$ of $U(k)$ restrict to the irreducible representation $V_{\bar \lambda}$ of $SU(k)$, where $\bar\lambda =(\lambda_1-\lambda_2,\cdots, \lambda_{k-1}-\lambda_k)$. The dual representation $V^*_{\bar \lambda}$ is  irreducible and hence given by $V_{\bar \lambda^*}$ for a corresponding dominant weight $\bar \lambda^*$.
The fusion ring of $SU(k)$ at level $r$, denoted by $R(SU(k))_r$, is an associated, commutative ring, defined by
  $$[V_{\bar \lambda}]*_r [V_{\bar \mu}]=\sum_{\bar \nu} N_0^{(r)}(\bar \lambda, \bar \mu, \bar \nu)[V_{\bar \nu^*}]$$
where
let $ N_0^{(r)}(\bar \lambda, \bar \mu, \bar \nu)$ denotes the dimension of the corresponding vector space of
conformal blocks for genus $0$ at level $r$ as described in \cite{Beau}.
The tensor product of $R(SU(k))_r$ and the fusion ring $R(U(1))_{(k+r)k}=\mathbb{Z}[x]/(x^{k(k+r)}-1)$ contains a unital subring
 $\tilde R\leqslant  R(SU(k))_r\otimes_\mathbb{Z}R(U(1))_{(k+r)k}$ spanned  by elements of the form $[V_{\bar \lambda}]\otimes x^a$ where $|\bar \lambda|\equiv a\,(\!\!\!\!\mod k)$. Denote by $\eta_\lambda:=(r+\lambda_k, \lambda_1,\cdots, \lambda_{k-1})$. The $\mathbb{Z}$-submodule $\mathcal{I}$ spanned by $\{[V_{\bar \eta_\lambda}]\otimes x^{a+k+r}-[V_{\bar \lambda}]\otimes x^a\mid |\bar \lambda|\equiv a\,(\!\!\!\!\mod k)\}$ is in fact an ideal of $\tilde R$. Then the fusion ring $(A_r, \star_r)$ (i.e. the fusion ring of $U(k)$ at level $(r, r+k)$) can be revealed as the quotient ring $\tilde R/\mathcal{I}$,
by identifying $[\mathbb{S}_\lambda(V)]\in A_r$ with $[V_{\bar \lambda}]\otimes x^{|\lambda|}+\mathcal{I}\in \tilde R/\mathcal{I}$. The following   remarkable property is  due to Witten \cite{Witt}  (see \cite{Agni, Belk} for mathematical proofs).
\begin{prop}\label{Witteniso}
   The natural isomorphism of $\mathbb{Z}$-modules $$ \Phi_r: (QH^*(Gr(k, k+r))|_{q=1}, \star) \longrightarrow (A_r, \star_r); \quad [X_\lambda]\mapsto [\mathbb{S}_\lambda(V)]$$
  is an isomorphism of rings.
\end{prop}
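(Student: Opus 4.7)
The plan is to match explicit presentations of both rings as quotients of the algebra $\Lambda_k:=\mathbb{Z}[e_1,\ldots,e_k]$ of symmetric polynomials in $k$ variables, and then observe that $\Phi_r$ is the identity under these identifications. Concretely, the polynomial representation ring of $U(k)$ is $\Lambda_k$ with $[\mathbb{S}_\lambda(V)]\leftrightarrow s_\lambda$ (Jacobi--Trudi), so $A_r$ is a quotient of $\Lambda_k$ via the construction $\tilde R/\mathcal{I}$. On the other side, the Siebert--Tian presentation of the quantum cohomology gives
\[
QH^*(Gr(k,k+r))\cong\mathbb{Z}[q][e_1,\ldots,e_k]/(h_{r+1},\ldots,h_{r+k-1},h_{r+k}+(-1)^{k}q),
\]
where $h_j$ is the complete symmetric polynomial expressed in $e_1,\ldots,e_k$ via Newton's identities and $[X_\lambda]\leftrightarrow s_\lambda$. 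Setting $q=1$ produces the relations $h_{r+j}=0$ for $1\leq j\leq k-1$ and $h_{r+k}=(-1)^{k-1}$. So it suffices to show that the Verlinde quotient $\tilde R/\mathcal{I}$ is cut out from $\Lambda_k$ by precisely the same relations.

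First, I would unpack the Verlinde description: the dominant weights of $U(k)$ at level $(r,k+r)$ are in bijection with $\mathcal{P}_k(k+r)$, so $\{[\mathbb{S}_\lambda(V)]\}_{\lambda\in\mathcal{P}_k(k+r)}$ is a $\mathbb{Z}$-basis of $A_r$. The ideal $\mathcal{I}$ encodes two things: (a) the Gepner/conformal-block relations on $R(SU(k))_r$ which force the Schur polynomials $s_{\bar\nu}$ indexed by weights on the walls of the level-$r$ alcove to vanish, and (b) the identification $[\mathbb{S}_{\eta_\lambda}(V)]\otimes x^{|\lambda|+k+r}\equiv[\mathbb{S}_\lambda(V)]\otimes x^{|\lambda|}$, which corresponds to tensoring by $\det^{k+r}$ on $U(k)$. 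Step two is to translate both classes of relations into the language of symmetric polynomials. The cyclic shift $\lambda\mapsto\eta_\lambda=(r+\lambda_k,\lambda_1,\ldots,\lambda_{k-1})$ is exactly the action of the extended affine Weyl group that produces the Gepner fusion potential; using the Jacobi--Trudi determinant and the straightening rule for Schur polynomials, this shift translates into the relation $h_{r+k}=(-1)^{k-1}$, while vanishing of the wall weights translates into $h_{r+1}=\cdots=h_{r+k-1}=0$.

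Once the relations are matched, $\Phi_r$ is manifestly a ring isomorphism: both sides are the quotient of $\Lambda_k$ by the same ideal, and $\Phi_r$ is the identity $s_\lambda\mapsto s_\lambda$ in this common realization. Alternatively (and equivalently), one could prove the proposition by matching structure constants directly: the Verlinde numbers $N_0^{(r)}(\bar\lambda,\bar\mu,\bar\nu)$ are computed by the Verlinde formula as a sum over level-$(r,k+r)$ alcove points, and by Rietsch's spectral description of $QH^*(Gr(k,k+r))|_{q=1}$ (already invoked in Lemma \ref{lemspec}) the quantum Schubert structure constants $\sum_d N_{\lambda,\mu}^{\nu,d}$ admit a parallel description as a sum over the joint spectrum of the $\widehat{[X_\sigma]}$'s; the two sums can then be shown to coincide term by term.

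The main obstacle is the second step above, namely identifying the Verlinde/Gepner ideal with the quantum ideal $(h_{r+1},\ldots,h_{r+k-1},h_{r+k}-(-1)^{k-1})$. The two ideals arise from very different sources --- Wess--Zumino--Witten conformal blocks on one side, intersection theory of rational curves in $Gr(k,k+r)$ on the other --- and their equality is the substantive content of Witten's conjecture. Making this rigorous is exactly what is done in the references \cite{Agni} (via the Verlinde formula and direct comparison with the Jacobi ring of a Landau--Ginzburg potential) and \cite{Belk} (via parabolic bundle moduli and an intersection-theoretic identification), so at this point I would invoke their results rather than attempting an independent verification.
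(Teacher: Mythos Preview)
Your proposal is correct in spirit and ultimately lands in the same place as the paper: the paper does not prove Proposition~\ref{Witteniso} at all, but simply attributes it to Witten \cite{Witt} with rigorous proofs supplied in \cite{Agni, Belk}. Your write-up supplies useful context that the paper omits --- the Siebert--Tian presentation of $QH^*(Gr(k,k+r))$, the identification of both sides as quotients of $\Lambda_k$, and the observation that the substantive difficulty is matching the Gepner/Verlinde ideal with the quantum ideal --- but you then, quite correctly, recognize that this matching \emph{is} the theorem and invoke \cite{Agni, Belk} rather than reproving it. So the difference is one of exposition rather than mathematics: the paper treats the result as a black box from the literature, while you sketch the architecture of the proof before deferring to the same references. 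Both are acceptable; yours is more informative, the paper's is more honest about the fact that nothing new is being proved here.
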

\noindent Combining the above descriptions  with Theorem \ref{thmmain}, we have the following.
   \begin{thm}\label{thmmain3}
      The   $\mathbb{Z}$-module $A={{\rm Gr}\big({\rm Rep}(U(k))_{+}\big)}$ equipped with the family $\{((A_r, \star_r), B_r)\}_r$ is a $\mathbb{Z}_+^\bullet$-ring.
    The  Frobenius-Perron dimension $\rm FPd^\bullet: A\to \mathbb{C}$ is well defined (which is a $\mathbb{Z}_+^\bullet$-ring homomorphism), and is given by    $${\rm FPd}^\bullet: A\to \mathbb{C};\quad {\rm FPd}^\bullet([\mathbb{S}_\lambda(V)])=\dim \mathbb{S}_\lambda(V).$$
  \end{thm}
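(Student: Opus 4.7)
The plan proceeds in three stages: first verify that $(A, \{((A_r,\star_r), B_r)\}_r)$ meets the structural axioms of Definition \ref{defZ}; then use Witten's ring isomorphism (Proposition \ref{Witteniso}) to identify ${\rm FPdim}_{A_r}([\mathbb{S}_\lambda(V)])$ with a spectral radius already controlled on the quantum cohomology side; and finally invoke Theorem \ref{thmmain} to compute the $r\to\infty$ limit.

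\textbf{Verifying the $\mathbb{Z}_+^\bullet$-ring axioms.} Each $A_r$ is the free $\mathbb{Z}$-module with finite basis $B_r=\{[\mathbb{S}_\lambda(V)]\mid \lambda\in\mathcal{P}_k(k+r)\}$ of rank $\binom{k+r}{k}$, and the inclusion $\mathcal{P}_k(k+r)\subseteq\mathcal{P}_k(k+r')$ for $r\leq r'$ yields both the filtration condition (1) and the nested-basis condition (3). Condition (2) — that $(A_r,+,\star_r)$ is a $\mathbb{Z}_+$-ring on $B_r$ — is exactly the content of the Verlinde algebra construction via the quotient $\tilde R/\mathcal{I}$ recalled just before Proposition \ref{Witteniso}. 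Condition (4) is immediate because $B_0=\{[\mathbf{1}]\}$ and the class of the trivial representation is the unit of each $(A_r,\star_r)$.

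\textbf{Computing ${\rm FPdim}_{A_r}$ and passing to the limit.} Under the ring isomorphism $\Phi_r$ of Proposition \ref{Witteniso}, the multiplication operator $\widehat{[\mathbb{S}_\lambda(V)]}$ on $(A_r,\star_r)$ is conjugate to $\widehat{[X_\lambda]}$ on $QH^*(Gr(k,k+r))|_{q=1}$, so they share the same spectrum. Combined with Lemma \ref{lemspec} this gives
$${\rm FPdim}_{A_r}([\mathbb{S}_\lambda(V)])=\rho(\widehat{[X_\lambda]})=\rho_{k,\lambda}(k+r)$$
for every $\lambda\in\mathcal{P}_k(k+r)$. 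For a fixed $\lambda$ with $\lambda_k\geq 0$, this identity holds as soon as $r\geq \lambda_1$. Setting $x=k+r$ and applying Theorem \ref{thmmain} yields
$$\lim_{r\to\infty}{\rm FPdim}_{A_r}([\mathbb{S}_\lambda(V)])=\lim_{x\to+\infty}\rho_{k,\lambda}(x)=\dim\mathbb{S}_\lambda(V),$$
so the limit in Definition \ref{defmain} exists and lies in $\mathbb{R}_{>0}$; thus ${\rm FPd}^\bullet([\mathbb{S}_\lambda(V)])=\dim\mathbb{S}_\lambda(V)$, and linear extension gives the displayed formula on all of $A$.

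\textbf{Ring homomorphism property.} Each Verlinde algebra $(A_r,\star_r)$ is a commutative fusion ring, in particular transitive and unital, so by the general fact recalled after Definition \ref{fpdimstd} each ${\rm FPdim}_{A_r}:A_r\to\mathbb{C}$ is a bona fide ring homomorphism. Because these ring homomorphisms converge pointwise to ${\rm FPd}^\bullet$ by the previous paragraph, Definition \ref{defZ} is satisfied and ${\rm FPd}^\bullet$ is a $\mathbb{Z}_+^\bullet$-ring homomorphism, completing the proof.

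The genuine analytic content — convergence $\rho_{k,\lambda}(k+r)\to\dim\mathbb{S}_\lambda(V)$ — has already been isolated in Theorem \ref{thmmain} via Rietsch's spectral formula and the hook-length formula, so the main obstacle has been discharged upstream. The remaining delicate point is the identification of ${\rm FPdim}_{A_r}$ with the spectral radius on quantum cohomology, which is where Witten's isomorphism is indispensable; without Proposition \ref{Witteniso} one would have no direct handle on the eigenvalues of $\widehat{[\mathbb{S}_\lambda(V)]}$ inside the Verlinde algebra.
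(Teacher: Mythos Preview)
Your proof is correct and follows the same overall architecture as the paper's: verify the $\mathbb{Z}_+^\bullet$-ring axioms, transport the spectral radius through Witten's isomorphism $\Phi_r$, and apply Theorem \ref{thmmain} for the limit.

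The one substantive difference is in the ring-homomorphism step. You invoke the general fact (recalled after Definition \ref{fpdimstd}) that ${\rm FPdim}$ is a ring homomorphism on any transitive unital $\mathbb{Z}_+$-ring, applied to the Verlinde algebra as a fusion ring. The paper instead argues concretely: it cites Rietsch's result that the operators $\widehat{[X_\lambda]}$ on $QH^*(Gr(k,n))|_{q=1}$ are simultaneously diagonalizable in a common basis $\{\sigma_I\}$, from which the identities $\rho_n(a\widehat{[X_\lambda]}+b\widehat{[X_\mu]})=a\rho_n(\widehat{[X_\lambda]})+b\rho_n(\widehat{[X_\mu]})$ and $\rho_n(\widehat{[X_\lambda]}\,\widehat{[X_\mu]})=\rho_n(\widehat{[X_\lambda]})\rho_n(\widehat{[X_\mu]})$ follow directly. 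Your route is cleaner and avoids an extra citation; the paper's route has the advantage of being self-contained once Rietsch's spectral description is already in play, and it makes explicit why the linear extension of ${\rm FPd}^\bullet$ agrees with the limit of $\rho_n$ on arbitrary elements (the parenthetical remark at the end of the paper's proof).
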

  \begin{proof}
    Notice that $B_r=\{[\mathbb{S}_\lambda(V)]\mid \lambda \in \mathcal{P}_k(r)\}\subset B_{r+1}$ is a $\mathbb{Z}_+$-basis of $A_r$, and that $[\mathbf{1}]=[\mathbb{S}_{\mathbf{0}}(V)]\in A$ is the common identity of all $(A_r, \star_r)$. Therefore the first statement holds.

    Let $n=k+r$. By Proposition \ref{Witteniso} and Theorem \ref{thmmain},  we have
     $${\rm FPd}^\bullet ([\mathbb{S}_\lambda(V)])=\lim\limits_{r\to+\infty}{\rm FPdim}_{A_r}([\mathbb{S}_\lambda(V)])=\lim\limits_{n\to+\infty}\rho_{k, \lambda}(n)=\dim \mathbb{S}_\lambda(V).$$

    Notice that the linear operators $\{\widehat{[X_\lambda]}\}_\lambda$ on $QH^*(Gr(k, n))|_{q=1}$ are commutative. In fact, they can be simultaneously diagonalized  \cite[section 11]{Riet} (with respect to the common basis $\{\sigma_I\}$ therein).
   Hence, the following equalities of spectral radius $\rho_n(\cdot)$ for linear operators on $QH^*(Gr(k, n))|_{q=1}$ hold.
   $$\rho_n(a\widehat{[X_\lambda]}+b\widehat{[X_\mu]})=a\rho_n (\widehat{[X_\lambda]})+b\rho_n(\widehat{[X_\mu]});\quad
    \rho_n (\widehat{[X_\lambda]} \widehat{[X_\mu]})=\rho_n (\widehat{[X_\lambda]})\rho_n (\widehat{[X_\mu]}).$$
   Together with  Proposition \ref{Witteniso}, this shows  that
     ${\rm FPdim}_{A_r}: A_r\to \mathbb{C}$ is a ring homomorphism for any $r$.
Hence, ${\rm FPd}^\bullet: A\to \mathbb{C}$ is a $\mathbb{Z}_+^\bullet$-ring homomorphism. (Here we notice that
 the way of defining ${\rm FPd}^\bullet: A\to \mathbb{C}$ by the linear extension of the map ${\rm FPd}^\bullet:B\to \mathbb{C}$ is consistent with the way obtained by taking the limit   $\lim\limits_{n\to +\infty}\rho_n(\Phi^{-1}_r(\alpha))$ for any $\alpha\in A$.)
  \end{proof}

\section{Appendix: Proof of Theorem \ref{thmmain2}}

\begin{lemma}\label{lemTay}
   Let $a>b> 0$, and define     $f: \mathbb{R}^+\to\mathbb{R};   f(x):= \frac{\sin \frac{a}{x} }{\sin \frac{b}{x}}$.
   Then we have $f'(x)>0$ for any $x> {a\over \pi}$.
\end{lemma}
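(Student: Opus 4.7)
My plan is to reduce the problem to showing that a certain auxiliary function has a fixed sign, and then to compute its derivative, which turns out to be much simpler than the original expression.

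The first step is the substitution $u = 1/x$. The hypothesis $x > a/\pi$ becomes $0 < u < \pi/a$, and since $du/dx = -1/x^2 < 0$, showing $f'(x) > 0$ is equivalent to showing that $F(u) := \sin(au)/\sin(bu)$ is strictly decreasing on $(0,\pi/a)$. A direct quotient-rule computation gives
\begin{equation*}
F'(u) = \frac{a\cos(au)\sin(bu) - b\sin(au)\cos(bu)}{\sin^2(bu)},
\end{equation*}
and since the denominator is positive on the interval of interest (as $bu \in (0, b\pi/a) \subset (0,\pi)$), it suffices to prove that the numerator
\begin{equation*}
g(u) := a\cos(au)\sin(bu) - b\sin(au)\cos(bu)
\end{equation*}
is strictly negative on $(0,\pi/a)$.

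The key observation is that although $g(u)$ itself is a somewhat awkward combination of trigonometric terms, its derivative collapses dramatically: the two cross terms $ab\cos(au)\cos(bu)$ cancel, leaving
\begin{equation*}
g'(u) = (b^2 - a^2)\sin(au)\sin(bu).
\end{equation*}
Since $a > b > 0$ we have $b^2 - a^2 < 0$, and for $u \in (0, \pi/a)$ both $\sin(au)$ and $\sin(bu)$ are strictly positive (the first because $au \in (0,\pi)$, the second because $bu \in (0,\pi)$ as well). Hence $g'(u) < 0$ on the whole interval.

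Combined with the easy observation that $g(0) = 0$, this yields $g(u) < 0$ for all $u \in (0,\pi/a)$, so $F'(u) < 0$ there, and translating back via $u = 1/x$ gives $f'(x) > 0$ for $x > a/\pi$, as desired. I do not anticipate any real obstacle; the only subtlety is noticing the cancellation in $g'(u)$, which is what makes the sign analysis painless.
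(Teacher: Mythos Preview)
Your proof is correct and follows essentially the same approach as the paper: isolate the numerator of the derivative, differentiate it to obtain the factor $(b^2-a^2)\sin(\cdot)\sin(\cdot)$, and use the boundary value to fix its sign. The only cosmetic difference is that you first substitute $u=1/x$, so your boundary condition is $g(0)=0$ whereas the paper uses $\lim_{x\to+\infty} g(x)=0$; under the substitution these are the same statement, and your $g(u)$ is exactly the negative of the paper's $g(x)$.
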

 \begin{proof}
By direct calculations, we have
$$f'(x)={g(x)\over x^2 \left(\sin {b\over x} \right)^2}, \,\,\mbox{ where }\,\, g(x)={-a \cos {a\over x}\sin {b\over x}+ b\sin {a\over x}  \cos {b\over x}}.$$
Therefore $\displaystyle g'(x)={\sin {a\over x}\sin {b\over x}\over x^2}\cdot (-a^2+b^2)<0$ for any $x>{a\over \pi}$.
 Notice $\lim\limits_{x\rightarrow +\infty} g(x)=0$. It follows that  $g(x)>0$ and hence  $f'(x)> 0$ for any  $x\in \left({a\over \pi}, +\infty \right)$.
\end{proof}

\begin{proof}[Proof of Theorem \ref{thmmain2}]
    Notice that $\rho_{k, \lambda}(x)=\prod_{r=1}^{|\lambda|} C_r(x)$  with   $C_r(x)={\sin {a_{r}\pi\over x}\over \sin {b_{r}\pi\over x}}$. Clearly  $a_r\geq b_r>0$ for any $r$, and $\max\{a_r\mid 1\leq r\leq |\lambda|\}=k+\lambda_1-1$.

    To prove statement (1),   following the proof of \cite[Lemma 5.1]{ESSSW},  we use the elementary inequalities $x-{x^3\over 6}\leq \sin x\leq x$ for $x\geq 0$. It follows that
$$ C_r(n)={\sin{a_r\pi\over n}\over  \sin{b_r\pi \over n}} \geq {{a_r\pi\over n} \left(1-{1\over 6}\cdot \left({a_r\pi \over n}\right)^2 \right)\over {b_r\pi\over n}}= \frac {a_r}{b_r} \left(1- \frac{\pi^2 a_r^2}{6n^2} \right),$$
where $a_r\leq k+\lambda_1-1\leq n-1$ so that $1- \frac{\pi^2 a_r^2}{6n^2}>0$. Thus we have
  $$\rho_{k, \lambda}(n)=\prod_rC_r(n)\geq \prod_r {a_r\over b_r}\prod_r \left(1- \frac{\pi^2 a_r^2}{6n^2} \right)=\dim \mathbb{S}_{\lambda}(V)\prod_{(i, j)\in \lambda} \left(1- \frac{\pi^2 (k-i+j)^2}{6n^2} \right).$$

  To prove statement (2), we notice that for any  $x>k+\lambda_1-1$,
      $C_r(x)>0$; moreover, we have  $C'_r(x)>0$ whenever $a_r>b_r$ by Lemma \ref{lemTay}.
  Since $\lambda_i\neq \lambda_j$ for some $i\neq j$, $a_r>b_r$ does hold for some $r$. It follows that
    $$\rho_{k, \lambda}'(x)=\rho_{k, \lambda}(x)\sum_{r=1}^{|\lambda|}{C_r'(x)\over C_r(x)}>0 \mbox{ for any } x>k+\lambda_1-1,$$   where   $C_s(x)=1$ and $C'_s(x)=0$ whenever $a_s=b_s$.  That is, the first half   holds.
     \begin{align*}
             \rho_{k, \lambda}''(x)&=\rho_{k,\lambda}(x)\sum_{i, j}{C_i'(x)C_j'(x)\over C_i(x)C_j(x)}+\rho_{k,n}(x)\sum_r{C_r''(x)\over C_r(x)}-\rho_{k, n}(x)\sum_r \left({C_r'(x)\over C_r(x)}\right)^2\\
             &=\rho_{k, n}(x)\left(\sum_{i\neq j}{C_i'(x)C_j'(x)\over C_i(x)C_j(x)}+ \sum_r{C_r''(x)\over C_r(x)}\right).\\
        \end{align*}
    By Taylor expansion around $x=+\infty$, we have
          $${C'_r(x)\over C_r (x)}={ (-a_r^2+b_r^2)\pi^2\over 3 x^3}+o \left({1\over x^4}\right),\quad {C''_r(x)\over C_r (x)}={(-a_r^2+b_r^2)\pi^2\over x^4}+o\left({1\over x^5} \right).$$
          Hence, $$x^4\cdot {\rho_{k, \lambda}''(x)\over \rho_{k, n}(x)}= \sum_{r}(-a_r^2+b_r^2)\pi^2+o\left({1\over x} \right),$$
   where the leading term is negative. Thus $\rho_{k, n}''(x)<0$ for sufficiently large $x$.
\end{proof}
   \section*{Acknowledgements}
The  authors would like to thank Hongjia Chen, Leonardo C. Mihalcea, Kyoji Saito, Peng Shan, Wei Yuan and Ming Zhang for helpful discussions. The authors would also like to thank an anonymous referee for bringing our attention to references on the literature on the theory of rigid $C^*$-tensor categories.
C. Li is  supported      by NSFC Grants 11822113, 11831017 and 11771455.


\begin{thebibliography}{99}
\bibitem{Agni} S. Agnihotri, \,{\it  Quantum cohomology and the verlinde algebra}, Ph.D. thesis, University of Oxford, 1995.
\bibitem{Beau} A. Beauville, \,{\it  Conformal blocks, fusion rules and the Verlinde formula}, In: Proceedings
of the Hirzebruch 65 Conference on Algebraic Geometry (Ramat Gan, 1993), 75--96, Bar-Ilan University, Ramat Gan, 1996.
\bibitem{Belk} P. Belkale, \,{\it Quantum generalization of the Horn conjecture}, J. Amer. Math. Soc. 21 (2008), no. 2, 365--408.
\bibitem{BePl}A. Berman, R.J. Plemmons,\,{\it  Nonnegative matrices in the mathematical sciences},   Classics in Applied Mathematics, 9.  SIAM, Philadelphia, PA, 1994.
\bibitem{CGWZ3}J. Chen, Z. Gao, E. Wicks, J.J. Zhang, X. Zhang and H. Zhu, \,{\it Frobenius-Perron Theory of Endofunctors},
   Algebra Number Theory 13 (2019), no. 9, 2005--2055.
 \bibitem{ChHa} D. Cheong, M. Han,\,{\it Galkin's lower bound conjecture for Lagrangian and orthogonal Grassmannians}, Bull. Korean Math. Soc. 57 (2020), no. 4, 933--943.
\bibitem{ChLi}D. Cheong and C. Li,\,{\it On the conjecture $\mathcal{O}$ of  {GGI} for {$G/P$}}, Adv. Math.  {306} (2017), 704--721.
\bibitem{DoRo}S. Doplicher, J.E. Roberts,\,{\it A new duality theory for compact groups}, Invent. Math. 98 (1989), no. 1, 157--218.
 \bibitem{EGNO} P. Etingof, S. Gelaki, D. Nikshych and V. Ostrik,\,{\it Tensor categories}, Mathematical Surveys and Monographs, 205. American Mathematical Society, Providence, RI, 2015.
\bibitem{Etin} P. Etingof,\,{\it On Vafa's theorem for tensor categories} Math. Res. Lett. 9 (2002), no. 5, 651--658.
\bibitem{Etin2} P. Etingof,\,{\it Frobenius-Perron dimensions of integral $\mathbb{Z}_+$-rings and applications}, Algebr. Represent.
Theory 23 (2020), no. 5, 2059--2078.
  \bibitem{ENO} P. Etingof, D. Nikshych and V. Ostrik,\,{\it On fusion categories}, Annals of Mathematics 162 (2005), 581--642.
  \bibitem{EtOs} P. Etingof,  V. Ostrik,\,{\it Finite tensor categories}, Moscow Math. Journal 4 (2004), 627--654.
     \bibitem{ESSSW} L. Evans, L. Schneider, R. Shifler, L. Short and S. Warman,\,{\it Galkin's lower bound conjecture holds for the
Grassmannian}, Comm. Algebra 48 (2020), no. 2, 857--865.
\bibitem{FrKe} J. Fr\"ohlich, T. Kerler,\,{\it Quantum groups, quantum categories and quantum field theory}, Lecture Notes in Math. 1542 (1993).
  \bibitem{Fult} W. Fulton,\,{\it  Young tableaux. With applications to representation theory and geometry}, London Mathematical Society Student Texts, 35. Cambridge University Press, Cambridge, 1997.
  \bibitem{Galk} S. Galkin,\,{\it   The conifold point}, preprint at  arXiv: math.AG/1404.7388.
\bibitem{GGI}S. Galkin, V. Golyshev and H. Iritani,\,{\it Gamma classes and quantum cohomology of Fano manifolds: gamma conjectures}, Duke Math. J. 165 (2016), no. 11, 2005--2077.
\bibitem{GLR} P. Ghez, R. Lima and J.E. Roberts,\,{\it  $W^*$-categories}, Pacific J. Math. 120 (1985), no. 1, 79--109.
\bibitem{HiIz} F. Hiai, M. Izumi,\,{\it Amenability and strong amenability for fusion algebras with applications to subfactor theory}, Internat. J. Math. 9 (1998), no. 6, 669--722.
\bibitem{Jone}   V. Jones,\,{\it Index for subfactors}, Invent. Math. 72 (1983), 1--25.
\bibitem{LoRo}R. Longo, J.E. Roberts,\,{\it A theory of dimension}, K-Theory 11 (1997), no. 2, 103--159.
\bibitem{NeTu}S. Neshveyev, L. Tuset,\,{\it Compact quantum groups and their representation categories}, Cours Sp\'ecialis\'es, 20. Soci\'et\'e Math\'ematique de France, Paris, 2013.
\bibitem{Popa}S. Popa, Sorin,\,{\it Classification of amenable subfactors of type II}, Acta Math. 172 (1994), no. 2, 163--255.
 \bibitem{Riet} K. Rietsch,\,{\it Quantum cohomology rings of Grassmannians and total positivity},  Duke Math. J. 110 (2001), no. 3, 523--553.
  \bibitem{Robi} G.B. Robinson,\,{\it  Representation theory of the symmetric group}, Mathematical Expositions, No. 12. University of Toronto Press, Toronto 1961.

      \bibitem{Verl}E.  Verlinde,\,{\it Fusion rules and modular transformations in $2$D conformal field theory}, Nuclear Phys. B 300 (1988), no. 3, 360--376.
   \bibitem{Wick} E.  Wicks,\,{\it Frobenius-Perron theory of modified ADE bound quiver algebras}, J. Pure Appl. Algebra 223 (2019), no. 6, 2673--2708.
 \bibitem{Witt}E. Witten,\,{\it The Verlinde algebra and the cohomology of the Grassmannian}, Geometry, topology, and physics, 357--422, Conf. Proc. Lecture Notes Geom. Topology, IV, Int. Press, Cambridge, MA, 1995.
\end{thebibliography}
\end{document}